\else\usepackage[utf8]{inputenc}\fi
\DeclareRobustCommand{\bms}[1]{\boldsymbol{#1}}
\DeclarePairedDelimiterX{\norm}[1]{\lVert}{\rVert}{#1}
\DeclarePairedDelimiterX{\abs}[1]{\lvert}{\rvert}{#1}
\DeclareMathOperator{\sign}{sign}
\newtheorem{lemma}{Lemma}
\newtheorem{theorem}{Theorem}
\author{Mårten Wadenbäck}
\affil{Centre for Mathematical Sciences\\Lund University, Sweden}
\title{A Result for Orthogonal Plus Rank-$1$ Matrices}
\begin{document}
    \maketitle
    \begin{abstract}
        In this paper the sum of an orthogonal matrix and an outer product is studied, and a relation between the norms of the vectors forming the outer product and the singular values of the resulting matrix is presented. The main result may be found in Theorem~\ref{thm:main result}.
    \end{abstract}

    \section*{Preliminaries}
    We start by proving Lemma~\ref{lem:inequality involving norms} below, which will later be used in the proof of Lemma~\ref{lem:identity rotation case}.
    The proof of Lemma~\ref{lem:inequality involving norms} relies only on well-known properties of inner products and norms (in particular the \emph{Cauchy-Schwarz inequality}  and the \emph{triangle inequality}).
    The necessary background material may be found in \citet[Chapter~3]{book/trefethen_bau} or \citet[Chapter~6]{book/renardy_rogers}.
    Throughout this paper, $\norm{\bms{x}}$ denotes the Euclidean norm of $\bms{x}$.

    \begin{lemma}
        \label{lem:inequality involving norms}
        Suppose $\bms{x},\bms{y}\in\mathbb{R}^n$ with $\norm{\bms{x}}=1$. Then
        \begin{equation}
            \label{eq:lemma inequality}
            \norm{\bms{x}+\bms{y}}^2+\norm{\bms{y}}\,\norm{2\bms{x}+\bms{y}} \geq 1 \geq \norm{\bms{x}+\bms{y}}^2-\norm{\bms{y}}\,\norm{2\bms{x}+\bms{y}}.
        \end{equation}
    \end{lemma}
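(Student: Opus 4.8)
The plan is to recognise that the two inequalities in \eqref{eq:lemma inequality} are together equivalent to a single two-sided bound. Writing $a=\norm{\bms{x}+\bms{y}}^2$ and $b=\norm{\bms{y}}\,\norm{2\bms{x}+\bms{y}}$, the claim reads $a+b\geq 1\geq a-b$, which is exactly $\abs{a-1}\leq b$, i.e.
\[
    \abs*{\norm{\bms{x}+\bms{y}}^2-1}\leq\norm{\bms{y}}\,\norm{2\bms{x}+\bms{y}}.
\]
So it suffices to establish this one inequality and then split it into its two halves at the end.

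First I would expand the squared norm through the inner product. Using the hypothesis $\norm{\bms{x}}=1$,
\[
    \norm{\bms{x}+\bms{y}}^2=\norm{\bms{x}}^2+2\langle\bms{x},\bms{y}\rangle+\norm{\bms{y}}^2=1+2\langle\bms{x},\bms{y}\rangle+\norm{\bms{y}}^2,
\]
so that, after subtracting $1$ and recombining the surviving terms,
\[
    \norm{\bms{x}+\bms{y}}^2-1=2\langle\bms{x},\bms{y}\rangle+\norm{\bms{y}}^2=\langle\bms{y},\,2\bms{x}+\bms{y}\rangle.
\]
This factorisation is the crux of the argument: it rewrites the quantity we must control as a single inner product of the two vectors $\bms{y}$ and $2\bms{x}+\bms{y}$ whose norms appear on the right-hand side.

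The decisive step is then the \emph{Cauchy--Schwarz inequality} applied to that pair, giving $\abs{\langle\bms{y},\,2\bms{x}+\bms{y}\rangle}\leq\norm{\bms{y}}\,\norm{2\bms{x}+\bms{y}}$, which is precisely the reformulated bound; reading off its two sides recovers both inequalities of \eqref{eq:lemma inequality}. I expect the only genuine obstacle to be \emph{spotting} the factorisation $\norm{\bms{x}+\bms{y}}^2-1=\langle\bms{y},\,2\bms{x}+\bms{y}\rangle$, since once it is in hand the conclusion is immediate. I note that the triangle inequality advertised in the preliminaries does not seem to be needed along this route, so either the author proceeds by a more computational expansion of $\norm{2\bms{x}+\bms{y}}$, or uses it only to obtain a coarser intermediate estimate.
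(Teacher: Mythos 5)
Your proof is correct, and for half of the statement it coincides with the paper's; for the other half it is genuinely simpler. The paper also uses the identity $\norm{\bms{x}+\bms{y}}^2-1=\bms{y}^T(2\bms{x}+\bms{y})$ together with Cauchy--Schwarz, but only to obtain the \emph{second} inequality ($1\geq\norm{\bms{x}+\bms{y}}^2-\norm{\bms{y}}\,\norm{2\bms{x}+\bms{y}}$). For the first inequality the paper takes a more roundabout route: it starts from the triangle and reverse triangle inequalities to get $2\leq\norm{\bms{y}}+\norm{2\bms{x}+\bms{y}}$, multiplies through by $\norm{\bms{y}}$, and then applies Cauchy--Schwarz in the form $-2\norm{\bms{y}}\leq-2\abs{\bms{x}^T\bms{y}}\leq 2\bms{x}^T\bms{y}$ to reassemble $\norm{\bms{x}+\bms{y}}^2$. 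Your observation that the two inequalities together are exactly $\abs[\big]{\norm{\bms{x}+\bms{y}}^2-1}\leq\norm{\bms{y}}\,\norm{2\bms{x}+\bms{y}}$, which is a single instance of Cauchy--Schwarz applied to $\bms{y}$ and $2\bms{x}+\bms{y}$, subsumes both halves at once and renders the triangle-inequality preamble unnecessary. What the paper's version buys is arguably nothing beyond exhibiting the two bounds separately in the order they are later used; your version is shorter, and your closing remark is accurate --- the triangle inequality advertised in the preliminaries is indeed not needed on your route.
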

    \begin{proof}
        Using the triangle inequality followed by the reverse triangle inequality,
        \begin{equation}
            2 \leq \norm{\bms{y}}+\abs[\big]{2-\norm{\bms{y}}} \leq \norm{\bms{y}}+\norm{2\bms{x}+\bms{y}}.
        \end{equation}
        The first part of \eqref{eq:lemma inequality} now follows, since
        \begin{align}
            1 & \leq 1+\norm{\bms{y}}\big(\norm{\bms{y}}+\norm{2\bms{x}+\bms{y}}-2\big) \\
            & = 1-2\norm{\bms{y}}+\norm{\bms{y}}^2+\norm{\bms{y}}\,\norm{2\bms{x}+\bms{y}} \\
            & \leq 1-2\abs{\bms{x}^T\bms{y}}+\norm{\bms{y}}^2+\norm{\bms{y}}\,\norm{2\bms{x}+\bms{y}} \\
            & \leq 1+2\bms{x}^T\bms{y}+\norm{\bms{y}}^2+\norm{\bms{y}}\,\norm{2\bms{x}+\bms{y}} \\
            & = \norm{\bms{x}+\bms{y}}^2+\norm{\bms{y}}\,\norm{2\bms{x}+\bms{y}}.
        \end{align}
        The second part of \eqref{eq:lemma inequality} follows as
        \begin{align}
            1 & = 1+2\bms{x}^T\bms{y}+\norm{\bms{y}}^2-2\bms{x}^T\bms{y}-\norm{\bms{y}}^2 \\
            & = \norm{\bms{x}+\bms{y}}^2-\bms{y}^T(2\bms{x}+\bms{y}) \\
            & \geq \norm{\bms{x}+\bms{y}}^2-\norm{\bms{y}}\,\norm{2\bms{x}+\bms{y}}.
        \end{align}
        This concludes the proof.
    \end{proof}

    \section*{Main Result}
    The main result in this paper concerns the \emph{singular values} of a special kind of matrices.
    Recommended background material on the \emph{Singular Value Decomposition} (SVD) and its properties may be found in \citet[Section~2.5.3]{book/golub_van_loan} and \citet[Chapters~4--5]{book/trefethen_bau}.
    The present paper proves the following theorem.
    \begin{theorem}
        \label{thm:main result}
        Let $\bms{A} = \bms{Q}+\bms{a}\bms{b}^T$ for an orthogonal $n\times n$ matrix $\bms{Q}$ and $\bms{a},\bms{b}\in\mathbb{R}^n$.
        Let $\sigma_1,\ldots,\sigma_r$ be the singular values of $\bms{A}$, and if $r<n$ additionally let $\sigma_n=0$.
        Then
        \begin{equation}
            \sigma_1-\sign(1+\bms{a}^T\bms{Q}\bms{b})\sigma_n = \norm{\bms{a}}\,\norm{\bms{b}}.
        \end{equation}
    \end{theorem}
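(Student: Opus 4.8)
The plan is to diagonalise the Gram matrix $\bms{A}^T\bms{A}$, whose eigenvalues are the squares of the singular values of $\bms{A}$. Writing $\bms{u}=\bms{Q}^T\bms{a}$ and using $\bms{Q}^T\bms{Q}=\bms{I}$, expansion gives $\bms{A}^T\bms{A}=\bms{I}+\bms{M}$ with $\bms{M}=\bms{u}\bms{b}^T+\bms{b}\bms{u}^T+\norm{\bms{a}}^2\bms{b}\bms{b}^T$, a symmetric matrix of rank at most $2$. I would abbreviate the three scalars $p=\norm{\bms{a}}^2=\norm{\bms{u}}^2$, $s=\norm{\bms{b}}^2$ and $q=\bms{u}^T\bms{b}=\bms{a}^T\bms{Q}\bms{b}$, so that the right-hand side of the theorem is $\norm{\bms{a}}\,\norm{\bms{b}}=\sqrt{ps}$ and the sign is $\sign(1+q)$.

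Next I would reduce $\bms{M}$ to a $2\times 2$ eigenvalue problem. Factoring $\bms{M}=\bms{X}\bms{S}\bms{X}^T$ with $\bms{X}=[\,\bms{u}\ \ \bms{b}\,]$ and $\bms{S}=\begin{pmatrix}0&1\\1&p\end{pmatrix}$, the standard fact that $\bms{X}\bms{S}\bms{X}^T$ and $(\bms{X}^T\bms{X})\bms{S}$ share the same nonzero eigenvalues reduces the problem to the $2\times 2$ matrix $(\bms{X}^T\bms{X})\bms{S}$, whose trace and determinant compute to $2q+ps$ and $q^2-ps$. Hence $\bms{A}^T\bms{A}$ has the eigenvalue $1$ with multiplicity $n-2$, together with $\mu_{\pm}=1+\lambda_{\pm}$, where $\lambda_{\pm}$ are the roots of $\lambda^2-(2q+ps)\lambda+(q^2-ps)=0$. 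A virtue of this factorisation is that it covers the degenerate cases (when $\bms{u},\bms{b}$ are linearly dependent or vanish) uniformly, the corresponding root simply being zero, so that no separate treatment of rank-deficient $\bms{A}$ is needed.

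Then comes the arithmetic and the identification of which eigenvalues are $\sigma_1^2$ and $\sigma_n^2$. From Vieta's formulas, $\mu_++\mu_-=2(1+q)+ps$ and $\mu_+\mu_-=1+(2q+ps)+(q^2-ps)=(1+q)^2$. The crux is that Cauchy--Schwarz gives $q^2\le ps$, hence $\lambda_+\lambda_-=q^2-ps\le 0$, so $\lambda_-\le 0\le\lambda_+$ and therefore $\mu_-\le 1\le\mu_+$; the remaining eigenvalues all equal $1$. Thus $\sigma_1^2=\mu_+$ and $\sigma_n^2=\mu_-$, and $\sigma_1\sigma_n=\sqrt{\mu_+\mu_-}=\abs{1+q}$. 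Combining these,
\begin{equation*}
    \bigl(\sigma_1-\sign(1+q)\,\sigma_n\bigr)^2 = \sigma_1^2+\sigma_n^2-2\sign(1+q)\,\sigma_1\sigma_n = 2(1+q)+ps-2\sign(1+q)\abs{1+q}=ps ,
\end{equation*}
using $\sign(t)\abs{t}=t$. Since $\sigma_1\ge\sigma_n\ge 0$, the quantity $\sigma_1-\sign(1+q)\,\sigma_n$ is nonnegative in each of the three sign cases, so taking the square root yields $\sigma_1-\sign(1+q)\,\sigma_n=\sqrt{ps}=\norm{\bms{a}}\,\norm{\bms{b}}$.

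The main obstacle I anticipate is the bookkeeping in the middle two steps: establishing the clean identities $\mu_+\mu_-=(1+q)^2$ and $\mu_++\mu_-=2(1+q)+ps$, and then invoking Cauchy--Schwarz to certify that the two moving eigenvalues straddle $1$, so that they are precisely $\sigma_1^2$ and $\sigma_n^2$ rather than being interleaved with the repeated eigenvalue $1$. Once this is in place, the sign factor is essentially forced by $\sign(t)\abs{t}=t$, and the final extraction of the square root only requires the elementary nonnegativity check above. Note that this route is self-contained and does not invoke the preliminary Lemma~\ref{lem:inequality involving norms}.
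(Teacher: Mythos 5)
Your proof is correct, but it follows a genuinely different route from the paper's. The paper first reduces to $\bms{Q}=\bms{I}$ (multiplication by an orthogonal matrix preserves singular values), splits off the cases where $\bms{a}$ or $\bms{b}$ vanishes or the two relevant vectors are parallel, and in the generic case finds the two non-unit eigenvalues of $\bms{A}^T\bms{A}$ by explicitly solving for eigenvectors of the form $\bms{x}+s\bms{y}$; the key step of deciding which of the two eigenvalues is $\sigma_1^2$ and which is $\sigma_n^2$ (i.e.\ that they straddle $1$) is delegated to a bespoke norm inequality, Lemma~\ref{lem:inequality involving norms}. You instead keep $\bms{Q}$ throughout via $\bms{u}=\bms{Q}^T\bms{a}$, package the rank-$\le 2$ perturbation as $\bms{X}\bms{S}\bms{X}^T$, and invoke the standard fact that $\bms{C}\bms{D}$ and $\bms{D}\bms{C}$ have characteristic polynomials differing only by a power of $t$ to read the two moving eigenvalues off the trace and determinant of a $2\times2$ matrix. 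This buys you three things: no case analysis (the zero, parallel, and rank-deficient situations are absorbed into roots of the quadratic being zero or coincident); no need for Lemma~\ref{lem:inequality involving norms}, since the straddling property $\mu_-\le 1\le\mu_+$ falls out of Cauchy--Schwarz via $\lambda_+\lambda_-=q^2-ps\le 0$ (which incidentally also forces the roots to be real, so no separate discriminant check is needed); and a cleaner endgame via $\bigl(\sigma_1-\sign(1+q)\sigma_n\bigr)^2=ps$ in place of the paper's identity $(\lambda_1-1-\bms{x}^T\bms{y})^2=\norm{\bms{y}}^2\lambda_1$. What the paper's argument buys in exchange is explicitness (it exhibits the eigenvectors, not merely the eigenvalues) and complete elementarity, using nothing beyond the triangle and Cauchy--Schwarz inequalities. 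Both arguments implicitly assume $n\ge 2$, since both locate $\sigma_1$ and $\sigma_n$ by comparing the two moving eigenvalues against remaining eigenvalues equal to $1$.
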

    The theorem is obtained from Lemma~\ref{lem:identity rotation case} below by multiplication from any side by $\bms{Q}$ and renaming of the variables.
    This is possible since multiplication by an orthogonal matrix does not change the singular values.
    \begin{lemma}
        \label{lem:identity rotation case}
        Let $\bms{A} = \bms{I}+\bms{a}\bms{b}^T$ with $\bms{a},\bms{b}\in\mathbb{R}^n$.
        Let $\sigma_1,\ldots,\sigma_r$ be the singular values of $\bms{A}$, and if $r<n$ additionally let $\sigma_n=0$.
        Then
        \begin{equation}
            \label{eq:lemma}
            \sigma_1-\sign(1+\bms{a}^T\bms{b})\sigma_n = \norm{\bms{a}}\,\norm{\bms{b}}.
        \end{equation}
    \end{lemma}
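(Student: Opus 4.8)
The plan is to pass to the symmetric matrix $\bms{A}^T\bms{A} = \bms{I} + \bms{M}$, where $\bms{M} = \bms{a}\bms{b}^T + \bms{b}\bms{a}^T + \norm{\bms{a}}^2\bms{b}\bms{b}^T$, whose eigenvalues are the squared singular values $\sigma_i^2$. Since every column of $\bms{M}$ lies in $\operatorname{span}(\bms{a},\bms{b})$, the matrix $\bms{M}$ has rank at most $2$, so at least $n-2$ of the $\sigma_i^2$ equal $1$; the remaining two, written $1+\mu_+$ and $1+\mu_-$, come from the at most two nonzero eigenvalues $\mu_\pm$ of $\bms{M}$. First I would compute these from the $2\times 2$ matrix obtained by restricting $\bms{M}$ to $\operatorname{span}(\bms{a},\bms{b})$ (and the degenerate configuration $\bms{a}\parallel\bms{b}$ is covered by the same formulae, since the nonzero eigenvalues of a matrix product are unchanged under cyclic reordering of its factors, which replaces $\bms{M}$ by a fixed $2\times 2$ matrix). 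A direct calculation then gives $\mu_+ + \mu_- = 2\bms{a}^T\bms{b} + \norm{\bms{a}}^2\norm{\bms{b}}^2$ and $\mu_+\mu_- = (\bms{a}^T\bms{b})^2 - \norm{\bms{a}}^2\norm{\bms{b}}^2$.

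From these two symmetric functions I would extract everything. Writing $\sigma_{\pm} = \sqrt{1+\mu_\pm}$ for the two non-trivial singular values, the product of the two non-trivial eigenvalues is $\sigma_+^2\sigma_-^2 = (1+\mu_+)(1+\mu_-) = (1+\bms{a}^T\bms{b})^2$, hence $\sigma_+\sigma_- = \abs{1+\bms{a}^T\bms{b}}$, while $\sigma_+^2 + \sigma_-^2 = 2 + \mu_+ + \mu_- = 2 + 2\bms{a}^T\bms{b} + \norm{\bms{a}}^2\norm{\bms{b}}^2$. The crucial observation is that the Cauchy--Schwarz inequality forces $\mu_+\mu_- = (\bms{a}^T\bms{b})^2 - \norm{\bms{a}}^2\norm{\bms{b}}^2 \le 0$, so one of $\mu_\pm$ is nonnegative and the other nonpositive; consequently $\sigma_-\le 1\le\sigma_+$, and because all the other singular values equal $1$ this pins down $\sigma_1 = \sigma_+$ and $\sigma_n = \sigma_-$. (A two-sided bound of $\sigma_i^2$ around $1$ of precisely this type is what Lemma~\ref{lem:inequality involving norms} delivers, and could be used in place of the eigenvalue computation to obtain this sandwiching.)

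It then remains to combine $\sigma_1\sigma_n$ and $\sigma_1^2+\sigma_n^2$ according to the sign of $1+\bms{a}^T\bms{b}$. When $1+\bms{a}^T\bms{b}\ge 0$ I would form
\begin{equation}
(\sigma_1 - \sigma_n)^2 = \sigma_1^2 + \sigma_n^2 - 2\sigma_1\sigma_n = 2 + 2\bms{a}^T\bms{b} + \norm{\bms{a}}^2\norm{\bms{b}}^2 - 2(1+\bms{a}^T\bms{b}) = \norm{\bms{a}}^2\norm{\bms{b}}^2,
\end{equation}
giving $\sigma_1 - \sigma_n = \norm{\bms{a}}\norm{\bms{b}}$, in agreement with $\sign(1+\bms{a}^T\bms{b}) = 1$; symmetrically, when $1+\bms{a}^T\bms{b} < 0$ the same computation with $+2\sigma_1\sigma_n$ yields $(\sigma_1+\sigma_n)^2 = \norm{\bms{a}}^2\norm{\bms{b}}^2$, matching $\sign(1+\bms{a}^T\bms{b}) = -1$. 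The boundary case $1+\bms{a}^T\bms{b} = 0$ is exactly where $\det\bms{A} = 1+\bms{a}^T\bms{b}$ vanishes, so $\bms{A}$ is singular, $\sigma_n = 0$ by the stated convention, and the formula reduces to $\sigma_1 = \norm{\bms{a}}\norm{\bms{b}}$, which the identities confirm. I expect the main obstacle to be the bookkeeping in the middle step: justifying that the two eigenvalues produced by the $2\times 2$ reduction really are the \emph{extreme} singular values $\sigma_1$ and $\sigma_n$ rather than being interlaced with the trivial value $1$, and handling the degenerate and rank-deficient configurations uniformly. The sign case-split and the final algebra are then routine.
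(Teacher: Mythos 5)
Your argument is correct, and it takes a genuinely different route from the paper. The paper's proof also passes to $\bms{A}^T\bms{A}$, but after normalising to $\bms{x}=\bms{a}/\norm{\bms{a}}$, $\bms{y}=\norm{\bms{a}}\bms{b}$ it splits into a parallel and a non-parallel case, explicitly constructs eigenvectors of the form $\bms{x}+s\bms{y}$, solves a quadratic for $s$, and obtains the two non-trivial eigenvalues in closed form as $\tfrac12+\tfrac12\norm{\bms{x}+\bms{y}}^2\pm\tfrac12\norm{\bms{y}}\norm{2\bms{x}+\bms{y}}$; it then needs the separate norm inequality of Lemma~\ref{lem:inequality involving norms} to show these straddle $1$, so that they are indeed $\sigma_1^2$ and $\sigma_n^2$. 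You instead never compute the eigenvalues individually: you read off the two elementary symmetric functions $\mu_++\mu_-=\operatorname{tr}\bms{M}$ and $\mu_+\mu_-=(\bms{a}^T\bms{b})^2-\norm{\bms{a}}^2\norm{\bms{b}}^2$ from the $2\times 2$ compression, and the single observation that Cauchy--Schwarz makes the latter nonpositive replaces Lemma~\ref{lem:inequality involving norms} entirely, giving $\sigma_-\le 1\le\sigma_+$ at once; the cyclic-permutation remark also absorbs the parallel/degenerate configurations into the same formulae, so no case split on the geometry of $\bms{a},\bms{b}$ is needed. The trade-off is that the paper's route yields explicit expressions for the extreme singular values (which may be of independent interest), while yours is shorter, avoids the auxiliary lemma, and makes transparent that Cauchy--Schwarz is exactly what forces the two non-trivial singular values to sandwich $1$. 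The one point you flagged as a worry --- that $\sigma_+$ and $\sigma_-$ might interlace with the trivial value $1$ --- is in fact already fully settled by your $\mu_+\mu_-\le 0$ observation, so nothing further is needed there; the only residual caveat (shared with the paper's own proof) is the implicit assumption $n\ge 2$, without which there is no reservoir of unit singular values and the statement itself degenerates.
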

    \begin{proof}
        If either $\bms{a}$ or $\bms{b}$ is zero, the proposition follows by inspection, as both sides in \eqref{eq:lemma} evaluate to zero.
        Suppose therefore that neither $\bms{a}$ nor $\bms{b}$ is zero, and introduce
        \begin{equation}
            \bms{x} = \frac{1}{\norm{\bms{a}}}\bms{a}
            \quad\text{and}\quad
            \bms{y} = \norm{\bms{a}}\bms{b}.
        \end{equation}
        Now
        \begin{equation}
            \label{eq:ATA}
            \bms{A}^T\bms{A}=(\bms{I}+\bms{x}\bms{y}^T)^T(\bms{I}+\bms{x}\bms{y}^T)=\bms{I}+\bms{x}\bms{y}^T+\bms{y}\bms{x}^T+\bms{y}\bms{y}^T,
        \end{equation}
        and we may write
        \begin{equation}
            \label{eq:rank revelation}
            \bms{A}^T\bms{A}-\bms{I}=(\bms{x}+\bms{y})(\bms{x}+\bms{y})^T-\bms{x}\bms{x}^T.
        \end{equation}
        We see here that precisely two singular values of $\bms{A}$ are different from $1$, except when $\bms{x}$ and $\bms{y}$ are parallel, in which case only one of them is different from $1$.

        \paragraph{In the parallel case} $\bms{y}=\mu\bms{x}$ it is clear from \eqref{eq:ATA} that any vector orthogonal to $\bms{x}$ is an eigenvector to $\bms{A}^T\bms{A}$ with eigenvalue $1$, and any vector parallel to $\bms{x}$ is an eigenvector with eigenvalue
        \begin{equation}
            \lambda=1+2\mu+\mu^2=(1+\mu)^2=(1+\bms{x}^T\bms{y})^2.
        \end{equation}
        Now
        \begin{equation}
            (\lambda-1-\bms{x}^T\bms{y})^2 = (\lambda-1-\mu)^2
            = (\mu+\mu^2)^2
            = \mu^2(1+\mu)^2
            = \mu^2\lambda,
        \end{equation}
        and
        \begin{align}
            \abs{\mu}^2 = \frac{(\lambda-1-\bms{x}^T\bms{y})^2}{\lambda}
            = \Big(\sqrt{\lambda}-\frac{1+\bms{x}^T\bms{y}}{\sqrt{\lambda}}\Big)^2
            = \big(\sqrt{\lambda}-\sign(1+\bms{x}^T\bms{y})\big)^2.
        \end{align}
        Here $\sqrt{\lambda}$ is equal to either $\sigma_1$ or $\sigma_n$, depending on whether $\sqrt{\lambda}$ is larger or smaller than $1$.
        In both cases it is readily verified that
        \begin{equation}
            \sigma_1-\sign(1+\bms{a}^T\bms{b})\sigma_n
            = \sigma_1-\sign(1+\bms{x}^T\bms{y})\sigma_n
            = \abs{\mu}
            = \norm{\bms{a}}\,\norm{\bms{b}}.
        \end{equation}

        \paragraph{In the non-parallel case} it is clear from \eqref{eq:ATA} that any vector orthogonal to both $\bms{x}$ and $\bms{y}$ is an eigenvector to $\bms{A}^T\bms{A}$ with eigenvalue $1$, and we may choose $n-2$ linearly independent such vectors.
        It follows that the remaining two eigenvectors are linear combinations of $\bms{x}$ and $\bms{y}$.
        One finds that
        \begin{equation}
            \bms{A}^T\bms{A}(\bms{x}+s\bms{y})=(1+\bms{x}^T\bms{y}+s\norm{\bms{y}}^2)\bms{x}+(s+1+s\bms{x}^T\bms{y}+\bms{x}^T\bms{y}+s\norm{\bms{y}}^2)\bms{y},
        \end{equation}
        so in order for $\bms{x}+s\bms{y}$ to be an eigenvector one must have
        \begin{equation}
            \left\{
            \begin{aligned}
                \label{eq:requirements on s}
                1+\bms{x}^T\bms{y}+s\norm{\bms{y}}^2 & =\lambda \\
                s+s\bms{x}^T\bms{y}+1+\bms{x}^T\bms{y}+s\norm{\bms{y}}^2 & = \lambda s
            \end{aligned}
            \right..
        \end{equation}
        This means that
        \begin{equation}
            \begin{aligned}
                s+s\bms{x}^T\bms{y}+1+\bms{x}^T\bms{y}+s\norm{\bms{y}}^2 & = (1+\bms{x}^T\bms{y}+s\norm{\bms{y}}^2)s \Longleftrightarrow \\
                s^2-s-\frac{1+\bms{x}^T\bms{y}}{\norm{\bms{y}}^2} & =0 \Longleftrightarrow \\
                s & = \frac{1}{2}\pm\frac{\norm{2\bms{x}+\bms{y}}}{2\norm{\bms{y}}},
            \end{aligned}
        \end{equation}
        and together with \eqref{eq:requirements on s} this gives the two eigenvalues
        \begin{equation}
            \left\{
            \begin{aligned}
                \lambda_1 & = \frac{1}{2}+\frac{\norm{\bms{x}+\bms{y}}^2}{2}+\frac{\norm{\bms{y}}\,\norm{2\bms{x}+\bms{y}}}{2} \\
                \lambda_2 & = \frac{1}{2}+\frac{\norm{\bms{x}+\bms{y}}^2}{2}-\frac{\norm{\bms{y}}\,\norm{2\bms{x}+\bms{y}}}{2}
            \end{aligned}
            \right..
        \end{equation}
        Using Lemma~\ref{lem:inequality involving norms} and the fact that $\lambda_1 \neq 1$ and $\lambda_2 \neq 1$ we conclude that $\lambda_1=\sigma_1^2$ and $\lambda_2=\sigma_n^2$.
        It can be verified that
        \begin{equation}
            \lambda_1\lambda_2 = (1+\bms{x}^T\bms{y})^2,
        \end{equation}
        and some further computations show that
        \begin{equation}
            (\lambda_1-1-\bms{x}^T\bms{y})^2 = \norm{\bms{y}}^2\lambda_1.
        \end{equation}
        Dividing both sides by $\lambda_1$, we see that
        \begin{equation}
            \begin{aligned}
                \norm{\bms{y}}^2 & = \frac{(\lambda_1-1-\bms{x}^T\bms{y})^2}{\lambda_1}
                = \Big(\sqrt{\lambda_1}-\frac{1+\bms{x}^T\bms{y}}{\sqrt{\lambda_1}}\Big)^2 \\
                & = \Big(\sqrt{\lambda_1}-\sign(1+\bms{x}^T\bms{y})\sqrt{\lambda_2}\Big)^2.
            \end{aligned}
        \end{equation}
        As $\lambda_1>\lambda_2$, it is clear that
        \begin{equation}
            \norm{\bms{y}} = \sigma_1-\sign(1+\bms{x}^T\bms{y})\sigma_n = \sigma_1-\sign(1+\bms{a}^T\bms{b})\sigma_n = \norm{\bms{a}}\,\norm{\bms{b}},
        \end{equation}
        which is what we wanted to prove.
    \end{proof}

    \section*{Acknowledgements}
    The author would like to thank Anders Heyden for feedback throughout the revisions of the manuscript, and Keith Hannabuss for suggesting the formulation~\eqref{eq:rank revelation} in the discussion of a closely related problem.

    \printbibliography
\end{document}